\pgfplotsset{compat=1.18} 
\newcommand{\C}{\mathbb{C}}
\newcommand{\tr}{\operatorname{Tr}}
\newtheorem{thm}{Theorem}[section]
\newtheorem{lem}[thm]{Lemma}
\newtheorem{prop}[thm]{Proposition}
\newtheorem{cor}[thm]{Corollary}
\newtheorem{ques}[thm]{Question}
\newtheorem{conj}[thm]{Conjecture}
\theoremstyle{definition}
\newtheorem{remark}[thm]{Remark}
\numberwithin{equation}{section}
\begin{document}

\title{Generalizing Lee's conjecture on the sum of absolute values of matrices}

\author[Q.~Tang]{Quanyu Tang}
\author[S.~Zhang]{Shu Zhang}

\address{School of Mathematics and Statistics, Xi'an Jiaotong University, Xi'an 710049, P. R. China}
\email{tang\_quanyu@163.com}
\address{School of Mathematics and Statistics, Xi'an Jiaotong University, Xi'an 710049, P. R. China}
\email{2213715573@stu.xjtu.edu.cn}

\subjclass[2020]{Primary 15A60, 47A30.}
\keywords{Lee's conjecture, Schatten norm, Frobenius norm, equiangular system}

\begin{abstract}
Let $\|\!\cdot\!\|_p$ denote the Schatten $p$-norm of matrices and $\|\!\cdot\!\|_F$ the Frobenius norm. 
For a square matrix $X$, let $|X|$ denote its absolute value. 
In 2010, Eun-Young Lee posed the problem of determining the smallest constant $c_p$ such that $\|A+B\|_p \le c_p\|\,|A|+|B|\,\|_p$ for all complex matrices $A,B$. 
The Frobenius case $(p=2)$ conjectured by Lee was proved by Lin and Zhang (2022)~\cite{LinZhang2022} and re-proved by Zhang (2025)~\cite{Zhang2025}. 
In this paper, we extend Lee's conjecture from two matrices to an arbitrary number $m \ge 2$ of complex matrices $A_1,\dots,A_m$, and determine the sharp inequality
$$
   \left\|\sum_{k=1}^{m} A_k\right\|_F
   \le \sqrt{\frac{1+\sqrt{m}}{2}}\;
        \left\|\sum_{k=1}^{m}|A_k|\right\|_F ,
$$
with equality attained by an equiangular rank-one family. 
We further generalize Lee's problem by seeking the smallest constant $c_p(m)$ such that $
   \|\sum_{k=1}^{m} A_k\|_p
   \le c_p(m)\,
        \|\sum_{k=1}^{m}|A_k|\|_p $. It is shown that $c_p(m)\le (\sqrt{m})^{1-1/p}$, and we conjecture a closed-form expression for the optimal value of $c_p(m)$ that recovers all known cases $p=1,2,\infty$.
\end{abstract}

\maketitle

\section{Introduction}
Let $M_n(\mathbb{C}) = \mathbb{C}^{n\times n}$ denote the space of all complex $n\times n$ matrices. For $A\in M_n(\mathbb{C})$, its \emph{absolute value} is defined by $|A|=(A^{*}A)^{1/2}$, where $A^{*}$ denotes the conjugate transpose of $A$.  
The singular values of $A$, arranged in nonincreasing order, are denoted by $s_1(A)\ge s_2(A)\ge\cdots\ge s_n(A)\ge0$. For $p\ge1$, the \emph{Schatten $p$-norm} of $A$ is
\[
   \|A\|_p = \bigl(\tr|A|^p\bigr)^{1/p}
   = \left(\sum_{j=1}^n s_j(A)^p\right)^{1/p}.
\]
The case $p=2$ gives the \emph{Frobenius norm},
\[
   \|A\|_F = \sqrt{\tr|A|^2}
   = \left(\sum_{j=1}^n s_j(A)^2\right)^{1/2}.
\]

For $A,B\in M_n(\mathbb C)$, Lee~\cite{Lee2010} asked for the smallest constant $c_p$ such that
\begin{equation}\label{eq:Lee}
  \|A+B\|_p  \le  c_p \|\,|A|+|B|\,\|_p,
\end{equation}
and conjectured that $c_2=\tfrac{1+\sqrt2}{2}$. The Frobenius case $p=2$ of \eqref{eq:Lee} was later proved in Lin and Zhang~\cite{LinZhang2022}, with an alternative proof via the matrix Cauchy--Schwarz inequality given in Zhang~\cite{Zhang2025}.

In this paper we first establish a sharp generalization of Lee's conjecture for the Frobenius norm involving $m$ matrices.

\begin{thm}\label{thm:sharpF}
For any $m\ge2$ and $A_1,\dots,A_m\in M_n(\C)$,
\begin{equation}\label{eq:sharpF}
\left\|\sum_{k=1}^m A_k\right\|_F
\le
\sqrt{\frac{1+\sqrt m}{2}}\;\left\|\sum_{k=1}^m |A_k|\right\|_F.
\end{equation}
Moreover, the constant $\sqrt{\frac{1+\sqrt m}{2}}$ in inequality \eqref{eq:sharpF} is optimal.
\end{thm}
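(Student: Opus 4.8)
The plan is to recast both sides through the Gram matrices of the families $(A_k)$ and $(|A_k|)$ with respect to the Frobenius inner product. Expanding, $\|\sum_k A_k\|_F^2 = \sum_{k,j}\tr(A_k^* A_j)$ and $\|\sum_k|A_k|\|_F^2 = \sum_{k,j}\tr(|A_k||A_j|)$. The diagonal terms coincide because $\tr(A_k^* A_k)=\tr(|A_k|^2)$, so the whole problem reduces to controlling the off-diagonal cross terms. Since the right-hand side is real, I would first pass to $\|\sum_k A_k\|_F^2 = \sum_{k,j}\operatorname{Re}\tr(A_k^*A_j)\le\sum_{k,j}|\tr(A_k^*A_j)|$, reducing the theorem to comparing $\sum_{k,j}|\tr(A_k^*A_j)|$ with $\tfrac{1+\sqrt m}{2}\sum_{k,j}\tr(|A_k||A_j|)$.

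The crux is the pairwise estimate
\[ |\tr(A_k^*A_j)| \le \sqrt{\,\|A_k\|_F\,\|A_j\|_F\;\tr(|A_k||A_j|)\,}. \]
To prove it, write polar decompositions $A_k=U_k|A_k|$ and $A_j=U_j|A_j|$ with $U_k,U_j$ unitary, put $P=|A_k|$, $Q=|A_j|$, $W=U_k^*U_j$, and factor $\tr(A_k^*A_j)=\tr(PWQ)=\tr\big((P^{1/2}WQ^{1/2})(Q^{1/2}P^{1/2})\big)$. Cauchy--Schwarz for the Frobenius inner product yields $|\tr(A_k^*A_j)|\le\|P^{1/2}WQ^{1/2}\|_F\,\|Q^{1/2}P^{1/2}\|_F$, where $\|Q^{1/2}P^{1/2}\|_F^2=\tr(PQ)=\tr(|A_k||A_j|)$ and $\|P^{1/2}WQ^{1/2}\|_F^2=\tr\big((W^*PW)Q\big)$. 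As $W^*PW$ and $Q$ are positive semidefinite with the same singular values as $A_k$ and $A_j$, von Neumann's trace inequality gives $\tr((W^*PW)Q)\le\sum_i s_i(A_k)s_i(A_j)\le\|A_k\|_F\|A_j\|_F$, the last step by Cauchy--Schwarz on the singular-value vectors. This is the step I expect to be the main obstacle: it is where the genuinely matricial nature of the problem enters, and it must be tight enough to recover the sharp constant (on rank-one inputs every inequality in it is an equality).

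With the pairwise bound in hand, set $u_k=\|A_k\|_F^{1/2}$ and $G_{kj}=\tr(|A_k||A_j|)^{1/2}\ge 0$, a symmetric nonnegative matrix with $G_{kk}=u_k^2$. Since $\sum_{k,j}\tr(|A_k||A_j|)=\|G\|_F^2$, the theorem reduces to the scalar inequality $u^\top G u\le\tfrac{1+\sqrt m}{2}\|G\|_F^2$. I would prove this by splitting off the diagonal, $u^\top G u=\sum_k u_k^4+\sum_{k\ne j}u_ku_jG_{kj}$, applying Cauchy--Schwarz to the off-diagonal sum, and invoking the power-mean bound $(\sum_k u_k^2)^2-\sum_k u_k^4\le(m-1)\sum_k u_k^4$. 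Writing $a=\sum_k u_k^4$ and $c=\|G\|_F^2-a$, the claim collapses to $a+\sqrt{m-1}\,\sqrt{ac}\le\tfrac{1+\sqrt m}{2}(a+c)$; after homogenization this is a quadratic in $\sqrt{c/a}$ whose discriminant $4(m-1)-4(1+\sqrt m)(\sqrt m-1)$ vanishes, so it is a perfect square and the inequality holds.

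For optimality I would exhibit the equiangular rank-one family $A_k=x\,y_k^*$, where $x$ is a fixed unit vector and $y_1,\dots,y_m$ are unit vectors with $\langle y_j,y_k\rangle=\tfrac{1}{\sqrt m+1}$ for $k\ne j$; such a system exists since the Gram matrix $(1-\alpha)I+\alpha J$, with $\alpha=\tfrac1{\sqrt m+1}$ and $J$ the all-ones matrix, is positive definite. Then $\|\sum_k A_k\|_F^2=\|\sum_k y_k\|^2=m+m(m-1)\alpha$ while $\|\sum_k|A_k|\|_F^2=\tr\big((\sum_k y_ky_k^*)^2\big)=m+m(m-1)\alpha^2$, and the choice of $\alpha$ makes the ratio exactly $\tfrac{1+\sqrt m}{2}$. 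A concluding check confirms that the pairwise trace bound, both Cauchy--Schwarz steps, and the perfect-square inequality are simultaneously tight on this family, certifying that the constant cannot be improved.
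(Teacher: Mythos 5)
Your proposal is correct, and while your optimality construction coincides with the paper's (the same rank-one equiangular family $A_k=x\,y_k^*$ with common inner product $1/(1+\sqrt m)$), your proof of the inequality itself takes a genuinely different route. The paper expands $\bigl\|\sum_k A_k\bigr\|_F^2$ via polar decompositions and then applies the quoted matrix Cauchy--Schwarz lemma of Zhang (Lemma~\ref{lem:MCS}), $4|\tr(QXY)|\le t\tr(X^2+Y^2)+\tfrac1t\tr(XY+YX)$, with a \emph{single} parameter $t$ common to all pairs; summing over pairs and choosing $t=\tfrac{1}{1+\sqrt m}$ makes the right-hand side collapse exactly to $\tfrac{1+\sqrt m}{2}\bigl\|\sum_k|A_k|\bigr\|_F^2$. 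You instead derive a self-contained pairwise bound $|\tr(A_k^*A_j)|\le\bigl(\|A_k\|_F\|A_j\|_F\,\tr(|A_k||A_j|)\bigr)^{1/2}$ --- your factorization $\tr(PWQ)=\tr\bigl((P^{1/2}WQ^{1/2})(Q^{1/2}P^{1/2})\bigr)$, the Frobenius Cauchy--Schwarz step, and von Neumann's trace inequality are all applied correctly, with $W$ unitary since square matrices admit unitary polar factors --- and then finish with a purely scalar argument (Cauchy--Schwarz on the off-diagonal sum, the bound $(\sum_k u_k^2)^2\le m\sum_k u_k^4$, and a quadratic in $\sqrt{c/a}$ whose discriminant $(m-1)-(1+\sqrt m)(\sqrt m-1)$ indeed vanishes), which yields exactly the constant $\tfrac{1+\sqrt m}{2}$. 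Two points of comparison are worth noting. First, your pairwise bound is actually sharper than what the paper's lemma gives for a single pair even after optimizing $t$: you retain the geometric mean $\|A_k\|_F\|A_j\|_F$ where the lemma produces the arithmetic mean $\tfrac12(\|A_k\|_F^2+\|A_j\|_F^2)$; the price is that your bounds no longer sum linearly, which is precisely what your reduction to $u^\top Gu\le\tfrac{1+\sqrt m}{2}\|G\|_F^2$ absorbs. Second, the paper's uniform-$t$ linearization is shorter once the external lemma is granted, whereas your argument is self-contained (needing only standard tools) and makes the equality conditions transparent --- all $\|A_k\|_F$ equal, all off-diagonal $G_{kj}$ equal, and $\sqrt{c/a}$ at the double root --- which is exactly what the extremal equiangular family realizes, as your concluding check observes.
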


\begin{remark}
Although we state Theorem~\ref{thm:sharpF} for matrices in $M_n(\mathbb{C})$, the same inequality (with the same proof) still holds for Hilbert--Schmidt operators on a complex Hilbert space and, more generally, for elements of a von Neumann algebra equipped with a regular trace. For simplicity we work in the finite-dimensional matrix setting.
\end{remark}

As a noteworthy special case, taking $m=9$ in Theorem~\ref{thm:sharpF} yields the sharp inequality
\[
  \left\|A_1 + \cdots + A_9\right\|_F
  \le \sqrt{2} \left\| |A_1| + \cdots + |A_9| \right\|_F,
\]
whose constant $\sqrt{2}$ coincides with the sharp constant in the two-matrix operator norm inequality
\[
  \|A+B\|_\infty \le \sqrt{2} \left\||A|+|B|\right\|_\infty.
\]

We then extend Lee's problem to the case of $m$ summands in the Schatten $p$-norm setting.  
\begin{ques}\label{ques:cp}
Let $p\ge1$ and $m\ge2$. 
Find the smallest constant $c_p(m)$ such that
\[
\left\|\sum_{k=1}^m A_k\right\|_p \le c_p(m) \left\|\sum_{k=1}^m |A_k|\right\|_p
\quad\text{for all }A_1,\dots,A_m\in M_n(\C).
\]
\end{ques}
Notice that Theorem~\ref{thm:sharpF} implies $c_2(m)=\sqrt{\frac{1+\sqrt m}{2}}$. For $p=1$ and $p=\infty$, the corresponding optimal constants $c_p(m)$ are also obtained exactly, 
while for general $p\ge1$ we establish in Corollary~\ref{cor:generalize_m_term_boound1} 
a unified upper bound for $c_p(m)$ and further propose in 
Conjecture~\ref{conj:cp} an explicit formula consistent with all known cases.

This paper is organized as follows. 
Section~\ref{sec:main_results1} contains the proof of Theorem~\ref{thm:sharpF}. 
In Section~\ref{sec:remarks1}, we discuss Question~\ref{ques:cp}, 
derive a unified upper bound for $c_p(m)$ in Corollary~\ref{cor:generalize_m_term_boound1}, 
and propose an explicit conjectural formula for $c_p(m)$ in Conjecture~\ref{conj:cp}.

\section{Main results}\label{sec:main_results1}

We will use the following result from \cite[Lemma 2.2]{Zhang2025}.

\begin{lem}[\cite{Zhang2025}]\label{lem:MCS}
Let $X, Y$ be two $n \times n$ positive semidefinite matrices. Assume $Q$ is a contraction of order $n$, i.e., $\|Q\|_\infty\le1$. Then for any $t>0$,
\[
4 \left|\tr(QXY)\right|
\le
t \tr(X^2+Y^2)+\frac1t \tr(XY+YX).
\]
\end{lem}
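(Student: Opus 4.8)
The plan is to prove the inequality by recognizing the left-hand side as a Frobenius inner product, applying the Cauchy--Schwarz inequality, and then controlling each of the two resulting factors separately; the contraction hypothesis $\|Q\|_\infty\le1$ will enter at exactly one place. I would begin by simplifying the right-hand side. Since the trace is cyclic, $\tr(YX)=\tr(XY)$, so $\tr(XY+YX)=2\tr(XY)$, and because $X,Y\ge0$ we have $\tr(XY)=\tr(X^{1/2}YX^{1/2})\ge0$. Thus the target reduces to $4|\tr(QXY)|\le t\tr(X^2+Y^2)+\tfrac{2}{t}\tr(XY)$.

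Next I would factor the left-hand trace using the positive square roots $X^{1/2},Y^{1/2}$. Writing $X=X^{1/2}X^{1/2}$, $Y=Y^{1/2}Y^{1/2}$ and using cyclicity,
\[
\tr(QXY)=\tr\bigl((Y^{1/2}QX^{1/2})(X^{1/2}Y^{1/2})\bigr)=\tr(MN),\qquad M:=Y^{1/2}QX^{1/2},\ \ N:=X^{1/2}Y^{1/2}.
\]
Since $\tr(MN)=\langle M^{*},N\rangle_F$ for the Frobenius inner product $\langle A,B\rangle_F=\tr(A^{*}B)$, the Cauchy--Schwarz inequality gives $|\tr(QXY)|\le\|M\|_F\|N\|_F$. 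Direct computation with the square roots yields $\|N\|_F^2=\tr(Y^{1/2}XY^{1/2})=\tr(XY)$ and $\|M\|_F^2=\tr(X^{1/2}Q^{*}YQX^{1/2})=\tr(Q^{*}YQ\,X)$.

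The crux, and where I expect the main difficulty, is to show $\|M\|_F^2=\tr(Q^{*}YQ\,X)\le\tfrac12\tr(X^2+Y^2)$. Both $Q^{*}YQ$ and $X$ are positive semidefinite, hence Hermitian, so the elementary bound $\tr(AB)\le\tfrac12\tr(A^2+B^2)$ (immediate from $\tr((A-B)^2)\ge0$) gives $\tr(Q^{*}YQ\,X)\le\tfrac12\tr((Q^{*}YQ)^2)+\tfrac12\tr(X^2)$. It then remains to prove $\tr((Q^{*}YQ)^2)\le\tr(Y^2)$, and this is precisely where the contraction hypothesis is used: from $\|Q\|_\infty\le1$ one has $QQ^{*}\le I$, and after a cyclic rearrangement $\tr((Q^{*}YQ)^2)=\tr\bigl((Y^{1/2}QQ^{*}Y^{1/2})^2\bigr)$ with $0\le S:=Y^{1/2}QQ^{*}Y^{1/2}\le Y$; the trace monotonicity $0\le S\le Y\Rightarrow\tr(S^2)\le\tr(SY)\le\tr(Y^2)$ then closes this step.

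Finally I would combine the pieces. With $a:=\|M\|_F$ and $b:=\|N\|_F$, the weighted AM--GM inequality $4ab\le 2t\,a^2+\tfrac{2}{t}b^2$ (equivalently $2(\sqrt t\,a-\tfrac{1}{\sqrt t}b)^2\ge0$) yields
\[
4|\tr(QXY)|\le 2t\|M\|_F^2+\tfrac{2}{t}\|N\|_F^2\le 2t\cdot\tfrac12\tr(X^2+Y^2)+\tfrac{2}{t}\tr(XY)=t\tr(X^2+Y^2)+\tfrac1t\tr(XY+YX),
\]
which is the claimed inequality. The overall argument is a clean Cauchy--Schwarz splitting, and the only nontrivial input is the contraction estimate $\tr((Q^{*}YQ)^2)\le\tr(Y^2)$.
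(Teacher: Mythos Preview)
Your argument is correct. The Cauchy--Schwarz splitting $\tr(QXY)=\tr(MN)$ with $M=Y^{1/2}QX^{1/2}$ and $N=X^{1/2}Y^{1/2}$, the bound $\|M\|_F^2=\tr(Q^*YQ\,X)\le\tfrac12\tr((Q^*YQ)^2+X^2)$, and the contraction step $\tr((Q^*YQ)^2)=\tr(S^2)\le\tr(Y^2)$ with $S=Y^{1/2}QQ^*Y^{1/2}\le Y$ are all valid; the final weighted AM--GM then gives exactly the stated inequality.

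There is nothing to compare against in this paper: Lemma~\ref{lem:MCS} is quoted without proof from \cite[Lemma~2.2]{Zhang2025}. Your proof is self-contained and in the spirit of the ``matrix Cauchy--Schwarz'' approach that the cited paper is named for, so it is a natural (and likely close) reconstruction of the original argument.
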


We are now ready to present the following. Here $\Re z$ denotes the real part of $z \in \mathbb{C}$.

\begin{proof}[Proof of Theorem~\ref{thm:sharpF}]
Write the polar decompositions $A_k=U_k|A_k|$. Expanding the Frobenius norm,
\[
\left\|\sum_{k=1}^m A_k\right\|_F^2
=\sum_{k=1}^m\tr|A_k|^2
+2\sum_{1\le j<k\le m}\Re\tr\left(U_j^\ast U_k |A_k| |A_j|\right).
\]
For each pair $(j,k)$ apply Lemma~\ref{lem:MCS} with $X=|A_j|$, $Y=|A_k|$, $Q=U_j^\ast U_k$;
using $|\Re z|\le |z|$ we get, for every $t>0$,
\[
2\left|\Re\tr(U_j^\ast U_k |A_k| |A_j|)\right|
\le \frac{t}{2} \tr(|A_j|^2+|A_k|^2)+\frac{1}{2t}\tr(|A_j||A_k|+|A_k||A_j|).
\]
Summing over $j<k$ gives
\[
\left\|\sum_{k=1}^m A_k\right\|_F^2
\le \left(1+\frac{m-1}{2}t\right)\sum_{k=1}^m\tr|A_k|^2
+\frac1t\sum_{1\le j<k\le m}\tr(|A_j||A_k|).
\]
Then set \(t=\frac{1}{1+\sqrt{m}}\), we have
\[
\left\|\sum_{k=1}^m A_k\right\|_F^2
\le \frac{1+\sqrt{m}}{2}\left(\sum_{k=1}^m\tr|A_k|^2
+2\sum_{1\le j<k\le m}\tr(|A_j||A_k|)\right)
=\frac{1+\sqrt{m}}{2}\left\|\sum_{k=1}^m |A_k|\right\|_F^2.
\]Taking square roots yields \eqref{eq:sharpF}.

Next we will show that the inequality \eqref{eq:sharpF} is sharp. Fix an undetermined parameter $s\in(0,1)$.  
We claim that there exist unit vectors $x_1,\dots,x_m\in\C^{m}$ such that  
\[
\langle x_j,x_k\rangle=
\begin{cases}
1,& j=k,\\
s,& j\neq k.
\end{cases}
\]
To see why such an equiangular system exists, consider the $m\times m$ Gram matrix
\[
G=(\langle x_j,x_k\rangle)_{j,k=1}^{m}=(1-s)I_m+sJ_m,
\]
where $J_m$ denotes the all–ones matrix. The matrix $G$ is positive semidefinite, so $G=X^*X$ for some $X$, and with $x_k$ the $k$th column of $X$ we obtain our unit equiangular system.

Fix a unit vector $u\in\C^m$ and set
\[
A_k:=u\,x_k^{*}\qquad(k=1,\dots,m).
\]
Then
\[
A_k^{*}A_k=x_k\,x_k^{*},
\quad\text{so}\quad
|A_k|=(A_k^{*}A_k)^{1/2}=(x_k x_k^{*})^{1/2}=x_k x_k^{*}.
\]
Here $x_k x_k^{*}$ is the orthogonal projection onto $\operatorname{span}\{x_k\}$ (it is also positive
semidefinite), hence equals its own positive square root. Now we compute both sides:
\[
\left\|\sum_{k=1}^m A_k\right\|_F^2
=\left\|\,u\sum_{k=1}^mx_k^{*}\,\right\|_F^2
=\left\|\sum_{k=1}^mx_k\right\|_2^2
= m + 2\sum_{1\le j<k\le m}\Re\langle x_j,x_k\rangle
= m + m(m-1)s,
\]
\[
\left\|\sum_{k=1}^m |A_k|\right\|_F^2
=\left\|\sum_{k=1}^m x_k x_k^{*}\right\|_F^2
= \sum_{j,k=1}^m \tr(x_j x_j^{*} x_k x_k^{*})
= m + 2\sum_{1\le j<k\le m} |\langle x_j,x_k\rangle|^{2}
= m + m(m-1)s^{2}.
\]Therefore
\[
\frac{\left\|\sum_{k=1}^m A_k\right\|_F^{2}}{\left\|\sum_{k=1}^m |A_k|\right\|_F^{2}}
=\frac{1+(m-1)s}{1+(m-1)s^{2}}
=: f(s).
\]Optimize $f(s)$ on $(0,1)$:
\[
f'(s)=0
\ \Longleftrightarrow\
(m-1)s^{2}+2s-1=0
\ \Longleftrightarrow\
s^{\ast}=\frac{\sqrt m-1}{m-1}=\frac{1}{1+\sqrt m}.
\]
Let $s=s^{\ast}\in(0,1)$, substituting back,
\[
f(s^{\ast})=\frac{1+\sqrt m}{2}
\quad\Longrightarrow\quad
\frac{\left\|\sum_{k=1}^m A_k\right\|_F}{\left\|\sum_{k=1}^m |A_k|\right\|_F}
 =
\sqrt{\frac{1+\sqrt m}{2}}.
\]
This shows that the inequality \eqref{eq:sharpF} is sharp. The proof is complete.\end{proof}

\begin{remark}\label{rem:m2-equality}
The extremal rank-one equiangular construction above reduces, when $m=2$, to the known equality case in Lee's conjecture; see \cite{LinZhang2022}. 
\end{remark}



\section{Concluding Remarks}\label{sec:remarks1}
We now turn to the generalized version of Lee's problem, 
that is, Question~\ref{ques:cp}.  
Supported by numerical experiments, 
we are led to the following conjectural formula for $c_p(m)$.

\begin{conj}\label{conj:cp}Let $p>1$ and $m\ge2$. Let $c_p(m)$ be the smallest number such that
\[
\left\|\sum_{k=1}^m A_k\right\|_p \le c_p(m) \left\|\sum_{k=1}^m |A_k|\right\|_p
\quad\text{for all }A_1,\dots,A_m\in M_n(\C).
\]
Then\footnote{
The limiting case $p=1$ is discussed separately in 
Remark~\ref{remark:p=1,2,infinity}.}
\begin{equation}\label{eq:cp-conj}
c_p(m)
=\frac{\sqrt{x_{p,m}\left(x_{p,m}+m-1\right)}}{\left(x_{p,m}^{\,p}+m-1\right)^{1/p}},
\qquad
x_{p,m}>0\text{ solves }x^p-2x-(m-1)=0.
\end{equation}
\end{conj}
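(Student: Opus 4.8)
The plan is to prove the claimed identity by establishing the two matching inequalities $c_p(m)\ge C$ and $c_p(m)\le C$, where $C=\frac{\sqrt{x_{p,m}(x_{p,m}+m-1)}}{(x_{p,m}^{\,p}+m-1)^{1/p}}$. The lower bound is accessible by extending the rank-one equiangular construction used in the sharpness part of Theorem~\ref{thm:sharpF}, whereas the upper (universal) bound is the genuine difficulty and is, I expect, the reason the statement is offered only as a conjecture.

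For the lower bound I would reuse the extremal family essentially verbatim: fix a unit vector $u\in\C^m$ and unit equiangular vectors $x_1,\dots,x_m$ with $\langle x_j,x_k\rangle=s$ for $j\ne k$ and $s\in(0,1)$, and set $A_k=u\,x_k^{*}$. Since $\sum_k A_k=u\bigl(\sum_k x_k\bigr)^{*}$ is rank one, its only nonzero singular value is $\bigl\|\sum_k x_k\bigr\|_2=\sqrt{m(1+(m-1)s)}$, so $\bigl\|\sum_k A_k\bigr\|_p=\sqrt{m(1+(m-1)s)}$ for every $p$. On the other side $\sum_k|A_k|=\sum_k x_kx_k^{*}=XX^{*}$, whose nonzero eigenvalues coincide with those of the Gram matrix $G=(1-s)I_m+sJ_m$, namely $a:=1+(m-1)s$ (once) and $b:=1-s$ (with multiplicity $m-1$); hence $\bigl\|\sum_k|A_k|\bigr\|_p^p=a^p+(m-1)b^p$. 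The scale-invariant ratio is then a one-variable function of $s$, and setting its derivative to zero — most cleanly after the substitution $x=a/b$, i.e. $a=\tfrac{xm}{x+m-1}$, $b=\tfrac{m}{x+m-1}$ — reduces the stationarity condition to $x^p-2x-(m-1)=0$, which for $p>1$ has a unique root $x_{p,m}>1$. Substituting this optimal $x$ back collapses the ratio to exactly $C$, yielding $c_p(m)\ge C$ and recovering the known values at $p=2$ (where $x_{2,m}=1+\sqrt m$ and $C=\sqrt{(1+\sqrt m)/2}$) and in the limit $p\to\infty$ (where $C\to\sqrt m$).

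The hard part is the matching upper bound, i.e. showing that no family can beat the equiangular one. The Frobenius proof rests on the quadratic expansion of $\|\cdot\|_F^2$ together with the single-parameter estimate of Lemma~\ref{lem:MCS}; neither survives for $p\ne2$, because $\|\cdot\|_p^p$ admits no analogous bilinear cross-term decomposition, and the general bound $c_p(m)\le(\sqrt m)^{1-1/p}$ of Corollary~\ref{cor:generalize_m_term_boound1} is strictly larger than $C$ for $1<p<\infty$. A natural strategy is a two-step reduction: first, reduce to rank-one summands $A_k=u\,x_k^{*}$ (so that $\sum_k A_k$ is automatically rank one and $\sum_k|A_k|$ is a positive combination of the projections $x_kx_k^{*}$), turning the inequality into a purely vectorial spectral optimization; second, prove that the maximizing configuration is equiangular, with the Gram spectrum of the rigid two-value form $\{a,\,b^{(m-1)}\}$ found above, after which the lower-bound computation closes the gap. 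I expect the decisive obstacle to be this last structural claim: establishing, for every $p$, that the optimal Gram matrix collapses to exactly two distinct eigenvalues is a nonconvex spectral problem with no evident symmetrization or majorization shortcut, and this is presumably why the result is stated only conjecturally.
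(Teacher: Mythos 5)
Your lower-bound argument is exactly the paper's own ``heuristic derivation'' of the conjecture: the same rank-one equiangular family $A_k=u\,x_k^{*}$, the same reduction of the ratio to a one-variable function via $x=a/b$ (the paper's $y=\frac{1+(m-1)s}{1-s}$), and the same stationarity equation $x^p-2x-(m-1)=0$, so this part is correct and matches the paper. Like the paper, you leave the matching upper bound unproved and correctly identify it as the genuinely open part --- which is precisely why the statement appears only as a conjecture --- so your assessment of what is provable here coincides with the paper's.
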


\begin{proof}[A heuristic derivation of Conjecture~\ref{conj:cp}]We again employ the construction that realizes equality in the proof of 
Theorem~\ref{thm:sharpF}. Keep the rank-one equiangular family $A_k=u\,x_k^\ast$, $\langle x_j,x_k\rangle=s\in(0,1)$.
As $\sum_k A_k=u\sum_{k}x_k^{*}$ is rank one, its Schatten $p$-norm equals its unique singular value:
\[
\left\|\sum_{k=1}^m A_k\right\|_p=\left\|\sum_{k=1}^m x_k\right\|_2=\sqrt{m+m(m-1)s}.
\]
The positive operator $\sum_k|A_k|=\sum_k x_kx_k^\ast$ has eigenvalues
$1+(m-1)s$ (multiplicity $1$) and $1-s$ (multiplicity $m-1$), so
\[
\left\|\sum_{k=1}^m |A_k|\right\|_p=\Big(\left(1+(m-1)s\right)^p+(m-1)(1-s)^p\Big)^{1/p}.
\]
Maximizing the ratio over $s\in(0,1)$ is equivalent to introducing
\(
y=\frac{1+(m-1)s}{1-s}\in (1,\infty)
\)
and optimizing
\[
R_p(y)=
\frac{\sqrt{y(y+m-1)}}{\left(y^p+m-1\right)^{1/p}}.
\]A logarithmic derivative shows the maximizer $y$ satisfies
$y^p=2y+(m-1)$, and substituting $x_{p,m}:=y$ gives \eqref{eq:cp-conj}.\end{proof}

In~\cite{Lee2010}, Lee mentioned that ``one may easily state a version of 
\cite[Corollaries~2.2 and~2.2a]{Lee2010} for finite families of operators'', 
but did not explicitly write down the corresponding $p$-norm formulation. For completeness, we state and prove such a version here. 
The argument follows the same structure as Lee's original proof, with minor modifications to handle $m$ summands. 

We recall that a function $f:[0,\infty)\to[0,\infty)$ is said to be \emph{geometrically convex} if $f\left(\sqrt{ab}\right) \le \sqrt{f(a)f(b)}$ for all $a,b>0$. We also recall that \emph{symmetric norms} refer to \emph{unitarily invariant norms}, namely those matrix norms satisfying $\|UAV\| = \|A\|$ for all unitary matrices $U,V$. In particular, the Schatten $p$-norms $(1\le p\le\infty)$ are unitarily invariant.

\begin{prop}\label{prop:generalize-m-term1}
Let $f:[0,\infty)\to[0,\infty)$ be concave and geometrically convex. Then for any matrices $A_1,\dots,A_m \in M_n(\C)$ and any Schatten norm $\|\cdot\|_q$ $(1\le q\le\infty)$,
\[
\left\|f \left(|A_1+\cdots+A_m|\right)\right\|_q
\le \left(\sqrt{m}\right)^{1-1/q} 
\left\|f(|A_1|)+\cdots+f(|A_m|)\right\|_q .
\]
\end{prop}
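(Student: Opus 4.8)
The plan is to mirror Lee's two–matrix argument by controlling the two extreme Schatten indices and interpolating between them. Set $S=\sum_{k}A_k$, $P=\sum_{k}|A_k|$ and $T=\sum_{k}f(|A_k|)$, and recall that a nonnegative concave $f$ on $[0,\infty)$ is automatically nondecreasing and subadditive and satisfies $f(\lambda t)\le\lambda f(t)$ for every $\lambda\ge1$. I would first prove the inequality at $q=\infty$ with constant $\sqrt m$ and at $q=1$ with constant $1$, and then recover the exponent $(\sqrt m)^{1-1/q}$ for general $q$ by interpolation.

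For the endpoint $q=\infty$ I would start from the vectorwise Cauchy--Schwarz bound $\|Sv\|^2\le m\sum_k\||A_k|v\|^2$, i.e.\ the operator inequality
\[
|S|^2=S^*S\le m\sum_{k}|A_k|^2 .
\]
Since $|A_k|\le P$ forces $|A_k|^2\le\|P\|_\infty\,|A_k|$, summing gives $\sum_k|A_k|^2\le\|P\|_\infty P$ and hence $\|S\|_\infty\le\sqrt m\,\|P\|_\infty$. Applying the nondecreasing $f$ together with $f(\sqrt m\,t)\le\sqrt m\,f(t)$ yields $\|f(|S|)\|_\infty=f(\|S\|_\infty)\le\sqrt m\,f(\|P\|_\infty)=\sqrt m\,\|f(P)\|_\infty$, and the Bourin--Uchiyama subadditivity inequality $\|f(P)\|_\infty=\|f(\sum_k|A_k|)\|_\infty\le\|\sum_k f(|A_k|)\|_\infty=\|T\|_\infty$ closes this endpoint. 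For the endpoint $q=1$ I would use Thompson's triangle inequality in its $m$–term form: there exist unitaries $W_k$ with $|S|\le\sum_k W_k|A_k|W_k^*$. Monotonicity of $X\mapsto\tr f(X)$ for nondecreasing $f$ and the Rotfel'd trace inequality for concave $f$ with $f(0)\ge0$ then give $\tr f(|S|)\le\tr f(\sum_k W_k|A_k|W_k^*)\le\sum_k\tr f(W_k|A_k|W_k^*)=\tr T$, that is $\|f(|S|)\|_1\le\|T\|_1$.

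The main obstacle is the passage from these two endpoints to the intermediate indices. Real interpolation of the scalar data alone does \emph{not} suffice: the elementary bound $\|X\|_q^q\le\|X\|_\infty^{q-1}\|X\|_1$ runs the wrong way on the right-hand side, and one can exhibit pairs $X,Y\ge0$ with $\|X\|_1\le\|Y\|_1$ and $\|X\|_\infty\le\sqrt m\,\|Y\|_\infty$ yet $\|X\|_q>(\sqrt m)^{1-1/q}\|Y\|_q$, so extra structure must be exploited. I would therefore linearize and apply complex (Stein) interpolation: writing $|A_k|=P^{1/2}D_k P^{1/2}$ with $D_k\ge0$ and $\sum_k D_k$ the support projection of $P$, so that $A_k=U_k P^{1/2}D_k P^{1/2}$, one constructs an analytic family $F(z)$ on the strip $0\le\Re z\le1$ whose boundary behaviour reproduces the operator–norm estimate on $\Re z=0$ and the trace estimate on $\Re z=1$; the three–lines theorem then delivers $(\sqrt m)^{1-\theta}\,1^{\theta}=(\sqrt m)^{1-1/q}$ at $\theta=1/q$. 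The genuinely hard point is carrying the nonlinear $f$ through this family, and this is exactly where the hypotheses on $f$ enter: concavity supplies the subadditivity and the endpoint estimates, while geometric convexity is what lets $f$ be compared with the geometrically affine powers $t\mapsto t^r$ (through an integral representation of $f$) so that $f$ behaves well along the interpolation path. I expect essentially all the difficulty to reside in this reduction and the analytic continuation, the two endpoint inequalities being comparatively routine.
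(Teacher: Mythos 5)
Your two endpoint estimates are correct: at $q=\infty$ the chain $S^*S\le m\sum_k|A_k|^2\le m\,\bigl\|\sum_k|A_k|\bigr\|_\infty\sum_k|A_k|$ does give $\|S\|_\infty\le\sqrt m\,\bigl\|\sum_k|A_k|\bigr\|_\infty$, and at $q=1$ Thompson's triangle inequality plus Rotfel'd gives constant $1$; you are also right that these two scalar inequalities alone cannot imply the intermediate cases. But that is exactly where your proposal stops being a proof. The entire content of the proposition lies in $1<q<\infty$, and what you offer there is a plan, not an argument: Stein (complex) interpolation applies to an analytic family of \emph{linear} operators, whereas $(A_1,\dots,A_m)\mapsto f(|A_1+\cdots+A_m|)$ is nonlinear twice over --- once through the absolute value and once through the functional calculus by $f$. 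You never construct the analytic family $F(z)$, never explain how the fixed matrix $f(|S|)$ arises as the value of such a family whose boundary values obey your endpoint bounds, and the claimed ``integral representation'' of geometrically convex functions by powers $t\mapsto t^r$ is not an available tool: geometric convexity only says that $\log f(e^t)$ is convex, which yields pointwise inequalities, not a superposition of powers. Since you yourself locate ``essentially all the difficulty'' in this step, the gap is genuine and sits precisely where the hypothesis of geometric convexity must do its work.

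The paper closes this gap with majorization, not interpolation. With $P=\sum_i|A_i^*|$ and $Q=\sum_i|A_i|$, positivity of the block matrix $\begin{pmatrix}P&S\\ S^*&Q\end{pmatrix}$ gives $S=P^{1/2}KQ^{1/2}$ with $\|K\|_\infty\le1$, hence the weak log-majorization $\prod_{j=1}^k s_j(S)\le\prod_{j=1}^k\lambda_j^{1/2}(P)\lambda_j^{1/2}(Q)$ for every $k$. Geometric convexity and monotonicity of $f$ enter exactly here: via $f\bigl(\sqrt{ab}\bigr)\le\sqrt{f(a)f(b)}$ and the convexity of $t\mapsto f(e^t)$, the log-majorization becomes the weak majorization $\sum_{j=1}^k\lambda_j\bigl(f(|S|)\bigr)\le\sum_{j=1}^k\lambda_j^{1/2}\bigl(f(P)\bigr)\lambda_j^{1/2}\bigl(f(Q)\bigr)$ at \emph{every} level $k$. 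This multi-level information is precisely what you correctly observed is missing from the two scalar endpoint bounds, and it controls all unitarily invariant norms simultaneously, so no interpolation is needed. Cauchy--Schwarz for symmetric norms then gives $\|f(|S|)\|_q\le\|f(P)\|_q^{1/2}\|f(Q)\|_q^{1/2}$, Bourin--Uchiyama subadditivity (which you also invoke) bounds these by $\bigl\|\sum_i f(|A_i^*|)\bigr\|_q^{1/2}\bigl\|\sum_i f(|A_i|)\bigr\|_q^{1/2}$, and the factor $(\sqrt m)^{1-1/q}$ comes from comparing $\sum_i f(|A_i^*|)$ with $\sum_i f(|A_i|)$ through the direct sum $\bigoplus_i f(|A_i|)$, using that $|A_i^*|$ and $|A_i|$ have the same eigenvalues. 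If you wish to salvage your outline, this majorization argument is the missing middle step; your endpoint computations, though correct, are then not needed at all.
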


\begin{proof}Let
\[
S:=\sum_{i=1}^m A_i,\qquad
P:=\sum_{i=1}^m |A_i^*|,\qquad
Q:=\sum_{i=1}^m |A_i|.
\]By \cite[Corollary~1.3.7]{Bhatia09}, for each $i$, the block matrix
$\begin{pmatrix}|A_i^*|&A_i\\ A_i^*&|A_i|\end{pmatrix}$ is positive semidefinite.
Summing over $i$ yields \(\begin{pmatrix} P & S\\ S^* & Q\end{pmatrix}\ \ge\ 0\). By \cite[Proposition~1.3.2]{Bhatia09}, there exists a contraction $K$ (i.e.\ $\|K\|_\infty\le1$)
such that $S=P^{1/2}\,K\,Q^{1/2}$. Let $s_j(\cdot)$ denote the singular values and $\lambda_j(\cdot)$ the eigenvalues of an operator (both in nonincreasing order). In the same way as in \cite[p.~208]{HJ91}, for every $k\ge1$, we have
\[
\prod_{j=1}^k \lambda_j(|S|) = \prod_{j=1}^k s_j(S) \le
\prod_{j=1}^k s_j(P^{1/2}) s_j(Q^{1/2})
=\prod_{j=1}^k \lambda_j^{1/2}(P)\lambda_j^{1/2}(Q).
\]Since $f$ is geometrically convex and increasing, we observe that $\phi(t)=f(e^t)$ is convex and increasing. The above weak log-majorisation then yields the following weak majorisation:
for all $k=1,2,\ldots,$
\[
\sum_{j=1}^k \lambda_j\left(f(|S|)\right)
\le
\sum_{j=1}^k f \left[\lambda_j^{1/2}(P)\lambda_j^{1/2}(Q)\right]
\le
\sum_{j=1}^k
\lambda_j^{1/2}\left(f(P)\right)\lambda_j^{1/2}\left(f(Q)\right).
\]Let $X$ be the diagonal matrix whose diagonal entries are
\[
\lambda_1^{1/2}\left(f(P)\right),\,
\lambda_2^{1/2}\left(f(P)\right),\,
\ldots,
\]and let $Y$ be the diagonal matrix with diagonal entries
\[
\lambda_1^{1/2}\left(f(Q)\right),\,
\lambda_2^{1/2}\left(f(Q)\right),\,
\ldots.
\]The above weak majorisation implies that
$\|f(|S|)\|\le \|XY\|$ for all symmetric norms. By the Cauchy–Schwarz inequality for symmetric norms,
\[
\|XY\|\le\|X^*X\|^{1/2}\|Y^*Y\|^{1/2},
\]
and hence
\begin{equation}\label{eq:geomean}
\|f(|S|)\|\le
\|f(P)\|^{1/2}\|f(Q)\|^{1/2}.
\end{equation}Now, since $f:[0,\infty)\to[0,\infty)$ is concave, we apply the Bourin--Uchiyama
subadditivity theorem (see \cite[Theorem~1.1]{BU07} and the remarks in
\cite[p.~516]{BU07}). For all symmetric norms, we have
\begin{equation}\label{eq:BU}
\|f(P)\|_q \le \left\|\sum_{i=1}^m f(|A_i^*|)\right\|_q,
\qquad
\|f(Q)\|_q \le \left\|\sum_{i=1}^m f(|A_i|)\right\|_q .
\end{equation}
Combining \eqref{eq:geomean} and \eqref{eq:BU} yields
\begin{equation}\label{eq:afterBU}
\|f(|S|)\|_q \ \le\
\left\|\sum_{i=1}^m f(|A_i^*|)\right\|_q^{1/2}
\left\|\sum_{i=1}^m f(|A_i|)\right\|_q^{1/2}.
\end{equation}By \cite[Lemma~2.2]{CondeMoslehian2016}, for arbitrary positive semidefinite $X_1,\dots,X_m$, we have
\begin{equation}\label{eq:triangle1}
\left\|\sum_{i=1}^m X_i\right\|_q  \le
\sum_{i=1}^m \|X_i\|_q \ \le\ m^{1-1/q}\left(\sum_{i=1}^m \|X_i\|_q^q\right)^{1/q}
= m^{1-1/q}\left\|\bigoplus_{i=1}^m X_i\right\|_q .
\end{equation}
and
\begin{equation}\label{eq:triangle2}
\left\|\bigoplus_{i=1}^m X_i\right\|_q
 \le \left\|\sum_{i=1}^m X_i\right\|_q .
\end{equation}
Since $|A_i^*|$ and $|A_i|$ have identical singular values, combining inequalities~\eqref{eq:triangle1} and \eqref{eq:triangle2} with
\eqref{eq:afterBU} yields
\[
\begin{aligned}
\|f(|S|)\|_q
&\le \left(m^{1-1/q}\left\|\bigoplus_{i=1}^m f(|A_i|)\right\|_q\right)^{1/2}
     \left\|\sum_{i=1}^m f(|A_i|)\right\|_q^{1/2}
\\
&\le m^{(1-1/q)/2}
   \left\|\sum_{i=1}^m f(|A_i|)\right\|_q^{1/2}
   \left\|\sum_{i=1}^m f(|A_i|)\right\|_q^{1/2} 
\\
&= \left(\sqrt{m}\right)^{1-1/q} \left\|\sum_{i=1}^m f(|A_i|)\right\|_q.
\end{aligned}
\]This completes the proof.
\end{proof}

As a direct corollary of Proposition~\ref{prop:generalize-m-term1}, we obtain the following unified upper bound for $c_p(m)$.
\begin{cor}\label{cor:generalize_m_term_boound1}
For any $p \geq 1$ and $m\ge2$,
\begin{equation}
c_p(m)\le
\left(\sqrt{m}\right)^{1-1/p}.
\end{equation}    
\end{cor}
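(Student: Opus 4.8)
The plan is to deduce the corollary directly from Proposition~\ref{prop:generalize-m-term1} by specializing the function $f$. Since the hypotheses require $f$ to be concave and geometrically convex, the most economical choice is the identity $f(t)=t$, which is the unique affine candidate and which collapses the general operator inequality of the proposition into precisely the linear inequality that defines $c_p(m)$.

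First I would verify that $f(t)=t$ satisfies both hypotheses. Concavity is immediate since $f$ is affine (indeed linear). For geometric convexity, one computes $f(\sqrt{ab})=\sqrt{ab}=\sqrt{f(a)f(b)}$ for all $a,b>0$, so the defining inequality $f(\sqrt{ab})\le\sqrt{f(a)f(b)}$ holds with equality; hence $f$ is geometrically convex. Both hypotheses are thus met, so the proposition applies to this $f$.

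Next, writing $S=\sum_{k=1}^m A_k$, I would observe that $f(|S|)=|S|$, and since $|S|$ and $S$ share the same singular values, $\|f(|S|)\|_p=\||S|\|_p=\|S\|_p$. Likewise $f(|A_k|)=|A_k|$ for each $k$, so the right-hand side of the proposition becomes $\bigl\|\sum_{k=1}^m|A_k|\bigr\|_p$. Applying Proposition~\ref{prop:generalize-m-term1} with $q=p$ (which is admissible since $p\ge1$) therefore yields exactly
\[
\left\|\sum_{k=1}^m A_k\right\|_p \le (\sqrt{m})^{1-1/p}\left\|\sum_{k=1}^m|A_k|\right\|_p
\qquad\text{for all }A_1,\dots,A_m\in M_n(\C).
\]

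Finally, since $c_p(m)$ is by definition the smallest constant for which this inequality holds for all $A_1,\dots,A_m\in M_n(\C)$, the displayed inequality immediately gives $c_p(m)\le(\sqrt{m})^{1-1/p}$, completing the argument. There is essentially no obstacle at this stage: all the analytic substance — the weak log-majorization derived from the polar/contraction factorization, the Bourin--Uchiyama subadditivity in \eqref{eq:BU}, and the Cauchy--Schwarz step for symmetric norms in \eqref{eq:geomean} — has already been absorbed into the proposition. The only thing to confirm is that $f(t)=t$ is an admissible choice, and the single genuinely verifiable point, namely geometric convexity of the identity, holds trivially with equality.
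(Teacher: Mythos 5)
Your proposal is correct and matches the paper's intended argument exactly: the paper presents this bound as a ``direct corollary'' of Proposition~\ref{prop:generalize-m-term1}, obtained precisely by specializing to $f(t)=t$ (which is trivially concave and geometrically convex) with $q=p$, just as you do. Your verification of the hypotheses and the identification $\|f(|S|)\|_p=\|S\|_p$ fill in the only details the paper leaves implicit.
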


\begin{remark}\label{remark:p=1,2,infinity}
The conjectural formula \eqref{eq:cp-conj} agrees with the exact values at $p=1,2,\infty$:
\begin{itemize}[leftmargin=1.2em]
\item $p=1$: letting $p\to 1^{+}$ in \eqref{eq:cp-conj} forces $x_{p,m}\to\infty$, so $c_p(m)\to 1$; in fact
$c_1(m)=1$ exactly, since Corollary~\ref{cor:generalize_m_term_boound1} gives $c_1(m)\le1$.
\item $p=2$: $x_{2,m}=1+\sqrt m$ and hence 
$c_2(m)=\sqrt{\tfrac{1+\sqrt m}{2}}$, in agreement with Theorem~\ref{thm:sharpF}.
\item $p=\infty$: as $p\to\infty$ we have $x_{p,m}\to1$ and
$(x_{p,m}^{p}+m-1)^{1/p}\to1$, hence \eqref{eq:cp-conj} yields
$c_{p}(m)\to\sqrt m$; in fact
$c_{\infty}(m)=\sqrt{m}$ exactly, since Corollary~\ref{cor:generalize_m_term_boound1} implies $c_{\infty}(m)\le \sqrt{m}$.
\end{itemize}
\end{remark}

\section*{Acknowledgements}
We are deeply grateful to Prof.~Minghua Lin for bringing this problem to our attention and for many insightful discussions. We also thank Teng Zhang for helpful comments. Finally, we thank the anonymous referee for the careful reading and constructive suggestions, which greatly improved the clarity of the paper.

\end{document}